\documentclass[12pt]{amsart}
\usepackage{a4wide}
\usepackage[T1]{fontenc}
\usepackage[utf8x]{inputenc}
\usepackage[english]{babel}
\usepackage{amscd,amsmath,amsthm,amssymb,graphics}
\usepackage{lmodern,pst-node}
\usepackage{multicol}
\usepackage{amsfonts,amssymb,amscd,amsmath,enumitem,verbatim}
\usepackage[pdfencoding=auto,bookmarks=true,bookmarksnumbered=true]{hyperref}



\unitlength=0.7cm

%
%
%

%
%
\def\frk{\frak}               

\def\Phi{{\frk n}}
\def\Phi{{\frk N}}
%

%

%
\def\opn#1#2{\def#1{\operatorname{#2}}} 
%
\opn\chara{char} \opn\length{\ell} \opn\pd{pd} \opn\rk{rk}
\opn\projdim{proj\,dim} \opn\injdim{inj\,dim} \opn\rank{rank}
\opn\depth{depth} \opn\grade{grade} \opn\height{height}
\opn\embdim{emb\,dim} \opn\codim{codim}

\opn\Tr{Tr} \opn\bigrank{big\,rank}
\opn\superheight{superheight}\opn\lcm{lcm}
\opn\trdeg{tr\,deg}
\opn\reg{reg} \opn\lreg{lreg} \opn\ini{in} \opn\lpd{lpd}
\opn\size{size}\opn\bigsize{bigsize}
\opn\cosize{cosize}\opn\bigcosize{bigcosize}
\opn\sdepth{sdepth}\opn\sreg{sreg}
\opn\link{link}\opn\fdepth{fdepth}
%
\opn\div{div} \opn\Div{Div} \opn\cl{cl} \opn\Cl{Cl}
%
%
\opn\Spec{Spec} \opn\Supp{Supp} \opn\supp{supp} \opn\Sing{Sing}
\opn\Ass{Ass} \opn\Min{Min}\opn\Mon{Mon} \opn\dstab{dstab} \opn\astab{astab}
\opn\Syz{Syz}
%
%
\opn\Ann{Ann} \opn\Rad{Rad} \opn\Soc{Soc}
%
%
\opn\Im{Im} \opn\Ker{Ker} \opn\Coker{Coker} \opn\Am{Am}
\opn\Hom{Hom} \opn\Tor{Tor} \opn\Ext{Ext} \opn\End{End}
\opn\Aut{Aut} \opn\id{id}

\opn\nat{nat}
\opn\pff{pf}
\opn\Pf{Pf} \opn\GL{GL} \opn\SL{SL} \opn\mod{mod} \opn\ord{ord}
\opn\Gin{Gin} \opn\Hilb{Hilb}\opn\sort{sort}
\opn\initial{init}
\opn\ende{end}
\opn\height{height}
\opn\type{type}
%
%
\opn\aff{aff} \opn\con{conv} \opn\relint{relint} \opn\st{st}
\opn\lk{lk} \opn\cn{cn} \opn\core{core} \opn\vol{vol}
\opn\link{link} \opn\star{star}\opn\lex{lex}
\opn\gr{gr}

%
%

\def\pot#1#2{#1[\kern-0.28ex[#2]\kern-0.28ex]}

%
%
\opn\dirlim{\underrightarrow{\lim}}
\opn\inivlim{\underleftarrow{\lim}}
%
%
%

%
%

\def\Implies{\ifmmode\Longrightarrow \else
        \unskip${}\Longrightarrow{}$\ignorespaces\fi}
\def\implies{\ifmmode\Rightarrow \else
        \unskip${}\Rightarrow{}$\ignorespaces\fi}
\def\iff{\ifmmode\Longleftrightarrow \else
        \unskip${}\Longleftrightarrow{}$\ignorespaces\fi}

\let\:=\colon
%
%
%


\makeatletter
\CheckCommand*\refstepcounter[1]{\stepcounter{#1}%
\protected@edef\@currentlabel
{\csname p@#1\endcsname\csname the#1\endcsname}%
}
\renewcommand*\refstepcounter[1]{\stepcounter{#1}%
\protected@edef\@currentlabel
{\csname p@#1\expandafter\endcsname\csname the#1\endcsname}%
}
\def\labelformat#1{\expandafter\def\csname p@#1\endcsname##1}
\DeclareRobustCommand\Ref[1]{\protected@edef\@tempa{\ref{#1}}%
\expandafter\MakeUppercase\@tempa
}
\makeatother
										
\makeatletter
\newcommand{\numberlike}[2]{%
\expandafter\def\csname c@#1\endcsname{%
\expandafter\csname c@#2\endcsname}%
}
\makeatother

																						
\def\DefaultNumberTheoremWithin{section}

\theoremstyle{plain}
\newtheorem{Lemma}{Lemma}
\numberwithin{Lemma}{\DefaultNumberTheoremWithin}
\labelformat{Lemma}{Lemma~#1}

\numberwithin{Claim}{\DefaultNumberTheoremWithin}
\numberlike{Claim}{Lemma}
\labelformat{Claim}{Claim~#1}
\newtheorem{Theorem}{Theorem}
\numberwithin{Theorem}{\DefaultNumberTheoremWithin}
\numberlike{Theorem}{Lemma}
\labelformat{Theorem}{Theorem~#1}
\newtheorem{Corollary}{Corollary}
\numberwithin{Corollary}{\DefaultNumberTheoremWithin}
\numberlike{Corollary}{Lemma}
\labelformat{Corollary}{Corollary~#1}
\newtheorem{Proposition}{Proposition}
\numberwithin{Proposition}{\DefaultNumberTheoremWithin}
\numberlike{Proposition}{Lemma}
\labelformat{Proposition}{Proposition~#1}

\numberwithin{Conjecture}{\DefaultNumberTheoremWithin}
\numberlike{Conjecture}{Lemma}
\labelformat{Conjecture}{Conjecture~#1}
									
\theoremstyle{definition}

\numberwithin{Definition}{\DefaultNumberTheoremWithin}
\numberlike{Definition}{Lemma}
\labelformat{Definition}{Definition~#1}
														
\theoremstyle{definition}

\numberwithin{Question}{\DefaultNumberTheoremWithin}
\numberlike{Question}{Lemma}
\labelformat{Question}{Question~#1}
																			
\theoremstyle{definition}

\numberwithin{Problem}{\DefaultNumberTheoremWithin}
\numberlike{Problem}{Lemma}
\labelformat{Problem}{Problem~#1}

\theoremstyle{remark}

\numberwithin{Remark}{\DefaultNumberTheoremWithin}
\numberlike{Remark}{Lemma}
\labelformat{Remark}{Remark~#1}
\theoremstyle{remark}

\numberwithin{Example}{\DefaultNumberTheoremWithin}
\numberlike{Example}{Lemma}

%
%
\let\epsilon\varepsilon
\let\kappa=\varkappa
%
%
\textwidth=15cm \textheight=22cm \topmargin=0.5cm
\oddsidemargin=0.5cm \evensidemargin=0.5cm \pagestyle{plain}
%
%
\def\qed{\ifhmode\textqed\fi
      \ifmmode\ifinner\quad\qedsymbol\else\dispqed\fi\fi}
\def\textqed{\unskip\nobreak\penalty50
       \hskip2em\hbox{}\nobreak\hfil\qedsymbol
       \parfillskip=0pt \finalhyphendemerits=0}
\def\dispqed{\rlap{\qquad\qedsymbol}}

%
\opn\dis{dis}
\def\pnt{{\raise0.5mm\hbox{\large\bf.}}}

\opn\Lex{Lex}


\begin{document}
\title{The Castelnuovo-Mumford regularity of binomial edge ideals}

\author{ Dariush Kiani and Sara Saeedi Madani }


\email{dkiani@aut.ac.ir, dkiani7@gmail.com}
\email{sarasaeedim@gmail.com}

\begin{abstract}
We prove a conjectured upper bound for the Castelnuovo-Mumford regularity of binomial edge ideals of graphs, due to Matsuda and Murai. Indeed, we prove that $\mathrm{reg}(J_G)\leq n-1$ for any graph $G$ with $n$ vertices, which is not a path. Moreover, we study the behavior of the regularity of binomial edge ideals under the join product of graphs.
\end{abstract}

\subjclass[2010]{05E40, 05C25, 16E05, 13C05 }
\keywords{ Binomial edge ideal, Castelnuovo-Mumford regularity, join product of graphs. }

\maketitle

\section{ Introduction }\label{Introduction}

\noindent The binomial edge ideal of a graph was introduced in \cite{HHHKR}, and \cite{O} at about the same time.
Let $G$ be a finite simple graph with vertex set $[n]$ and edge set $E(G)$.
Also, let $S=K[x_1,\ldots,x_n,y_1,\ldots,y_n]$ be the polynomial ring over a field $K$. Then the \textbf{binomial edge ideal} of $G$ in $S$,
denoted by $J_G$, is generated by binomials $f_{ij}=x_iy_j-x_jy_i$, where $i<j$ and $\{i,j\}\in E(G)$. Also, one could see this ideal as an ideal
generated by a collection of 2-minors of a $(2\times n)$-matrix whose entries are all indeterminates. Many of the algebraic properties and invariants of such ideals
were studied in \cite{D,EHH,EZ,HHHKR,KS,KS1,MM}, \cite{SK,SK1,SZ} and \cite{ZZ}. One of these invariants is the Castelnuovo-Mumford regularity. Recall that the Castelnuovo-Mumford regularity (or simply, regularity) of a graded $S$-module $M$ is defined as
\[
\mathrm{reg}(M)=\mathrm{max}\{j-i~:~\beta_{i,j}(M)\neq 0\}.
\]

In \cite{SK1}, the authors posed a conjecture about the Castelnuovo-Mumford regularity of the binomial edge ideal of graphs in terms of the number of maximal cliques of $G$, which is denoted by $c(G)$. \\

\noindent \textbf{Conjecture~A.} Let $G$ be a graph. Then $\mathrm{reg}(J_G)\leq c(G)+1$.\\

In \cite{SK}, the authors proved this conjecture for closed graphs, i.e. the graphs whose binomial edge ideals have a quadratic Gr\"{o}bner basis with respect to the lexicographic order induced by $x_1>\cdots >x_n>y_1>\cdots >y_n$.

In \cite{MM}, Matsuda and Murai gained an upper bound for the regularity of the binomial edge ideal of a graph on $n$ vertices as follows.

\begin{Theorem}\label{Matsuda-Murai}
\cite[Theorem~1.1]{MM} Let $G$ be a graph on $n$ vertices. Then $\mathrm{reg}(J_G)\leq n$.
\end{Theorem}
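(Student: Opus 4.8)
The plan is to prove the equivalent statement $\mathrm{reg}(S/J_G)\le n-1$ by induction, using the standard decomposition of $J_G$ along a vertex. For $v\in V(G)$ let $G_v$ denote the graph on $[n]$ obtained from $G$ by adding all edges $\{u,w\}$ with $u,w\in N_G(v)$ distinct, and let $G\setminus v$ be the deletion of $v$. The input is Ohtani's decomposition (see \cite{O})
\[
J_G=J_{G_v}\cap\bigl((x_v,y_v)+J_{G\setminus v}\bigr),
\]
which, together with the short exact sequence
\[
0\to S/J_G\to S/J_{G_v}\oplus S/\bigl((x_v,y_v)+J_{G\setminus v}\bigr)\to S/\bigl(J_{G_v}+(x_v,y_v)+J_{G\setminus v}\bigr)\to 0
\]
and the elementary bound $\mathrm{reg}(A)\le\max\{\mathrm{reg}(B),\mathrm{reg}(C)+1\}$ for a short exact sequence $0\to A\to B\to C\to 0$ (read off from the long exact sequence in $\mathrm{Tor}$), reduces the problem to controlling the regularity of the three modules $S/J_{G_v}$, $S/((x_v,y_v)+J_{G\setminus v})$ and $S/(J_{G_v}+(x_v,y_v)+J_{G\setminus v})$.

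First I would identify the last two modules with binomial edge ideals of smaller graphs: modding out $x_v,y_v$ gives $S/((x_v,y_v)+J_{G\setminus v})\cong S'/J_{G\setminus v}$ and, since $E(G\setminus v)\subseteq E(G_v\setminus v)$ so that the larger ideal absorbs the smaller, $S/(J_{G_v}+(x_v,y_v)+J_{G\setminus v})\cong S'/J_{G_v\setminus v}$, where $S'$ is the polynomial ring on the remaining $2(n-1)$ variables and $G\setminus v$, $G_v\setminus v$ are graphs on $n-1$ vertices. I would then run an outer induction on $n$, with base case $n=1$ (here $J_G=0$, so $\mathrm{reg}(S/J_G)=0$); if $G$ is disconnected, $J_G$ is a sum of the $J_{C_i}$ over the components $C_i$ in disjoint variable sets, so $\mathrm{reg}(S/J_G)=\sum_i\mathrm{reg}(S/J_{C_i})$ and each $C_i$ has fewer than $n$ vertices, and we are done by the outer hypothesis. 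For connected $G$ I would run a secondary induction on the number of non-edges $\binom{n}{2}-|E(G)|$, the base case being $G=K_n$, where $J_{K_n}$ is the ideal of $2$-minors of a generic $2\times n$ matrix and $\mathrm{reg}(S/J_{K_n})=1\le n-1$ (Eagon--Northcott).

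For the inductive step, take $G$ connected and not complete; then $G$ contains an induced path on three vertices, whose middle vertex $v$ is not simplicial, so $G_v$ is a graph on $n$ vertices with strictly more edges than $G$. The secondary induction hypothesis gives $\mathrm{reg}(S/J_{G_v})\le n-1$, and the outer hypothesis gives $\mathrm{reg}(S'/J_{G\setminus v})\le n-2$ and $\mathrm{reg}(S'/J_{G_v\setminus v})\le n-2$; feeding these into the bound above yields $\mathrm{reg}(S/J_G)\le\max\{n-1,(n-2)+1\}=n-1$, i.e.\ $\mathrm{reg}(J_G)\le n$. The only genuinely nontrivial ingredient is Ohtani's decomposition lemma (a precise statement about how $J_G$ splits, resting on the structure of its minimal primes); once it is available the argument is pure bookkeeping, the one combinatorial point being that a connected non-complete graph always has a non-simplicial vertex, at which passing from $G$ to $G_v$ strictly increases the edge count so that the secondary induction is well founded. (One could instead try to bound $\mathrm{reg}(S/\mathrm{in}_<(J_G))$ for the lexicographic order using the known Gr\"obner basis of $J_G$, but extracting the bound $n-1$ from the combinatorics of admissible paths looks harder than this recursion.)
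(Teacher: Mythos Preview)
The paper does not supply its own proof of this statement: \ref{Matsuda-Murai} is quoted verbatim from \cite[Theorem~1.1]{MM} and used as a black box in the proof of the main theorem. So there is nothing in the paper to compare your argument against directly.

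That said, your proof is correct, and it is \emph{not} the argument Matsuda and Murai give in \cite{MM}. Their original proof goes through the lexicographic initial ideal: using the Gr\"obner basis of \cite{HHHKR} in terms of admissible paths, they show that the minimal free resolution of $S/\ini_<(J_G)$ has no shifts beyond $n$, and then invoke $\reg(J_G)\le\reg(\ini_<(J_G))$. Your approach instead runs the Mayer--Vietoris sequence attached to Ohtani's splitting $J_G=J_{G_v}\cap\bigl((x_v,y_v)+J_{G\setminus v}\bigr)$, with a double induction on $n$ and on the number of non-edges; the only combinatorial input is that a connected non-complete graph has a non-simplicial vertex. This is cleaner and more in the spirit of the present paper (which repeatedly exploits short exact sequences and \cite[Corollary~18.7]{P}); it also avoids any analysis of admissible paths. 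Your parenthetical remark correctly identifies the initial-ideal route as the alternative, and that is precisely what \cite{MM} does.
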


This theorem, in particular, shows that Conjecture~A is valid for trees. Ene and Zarojanu in \cite{EZ} proved Conjecture~A for block graphs which are included in the class of chordal graphs.

In \cite[Corollary~2.3]{MM} (see also \cite[Corollary~17]{SK1}), the authors gave a lower bound for the binomial edge ideal of a graph. Indeed, if $l$ is the length of the longest induced path of the graph $G$, then $\reg(J_G)\geq l+1$.

Matsuda and Murai in \cite{MM} also posed a conjecture which says that $\mathrm{reg}(J_G)=n$ if and only if $G=P_n$, the path over $n$ vertices. In other words, by support of \ref{Matsuda-Murai}, one can reformulate this conjecture as the following. \\

\noindent \textbf{Conjecture~B.} Let $G$ be a graph on $n$ vertices which is not a path. Then $\mathrm{reg}(J_G)\leq n-1$. \\

Ene and Zarojanu in \cite{EZ} showed that Conjecture~B holds for block graphs. Moreover, Conjecture~B was proved for cycles by Zahid and Zafar in \cite{ZZ}.

In this paper, we study the regularity of the binomial edge ideals and especially Matsuda and Murai's conjecture. This paper is organized as follows.

In Section~\ref{Preliminaries}, we pose some definitions, facts and notation which will be used throughout the paper.

In Section~\ref{Murai}, we prove the conjecture of Matsuda and Murai (Conjecture B) which is the main result of this paper.

In Section~\ref{join}, we show that the regularity of the binomial edge ideal of the join product of two graphs $G_1$ and $G_2$, which are not both complete, is equal to $\mathrm{max}\{\mathrm{reg}(J_{G_1}),\mathrm{reg}(J_{G_2}),3\}$. Applying this fact, then we generalize some results of Schenzel and Zafar about complete $t$-partite graphs. Moreover, our result on the regularity of the join product of two graphs shows that if $\mathcal{A}$ is the set of all graphs for which the Conjecture A holds, then $\mathcal{A}$ is closed under the join product.

Throughout the paper, we mean by a graph $G$, a simple graph. Moreover, if $V=\{v_1,\ldots,v_n\}$ is the vertex set of $G$ (which contains $n$ elements), then for simplicity we denote it by $[n]$.

\section{ Preliminaries }\label{Preliminaries}

\noindent In this section, we review some notions and facts around graphs and binomial edge ideals, which we need throughout.

A vertex $v$ of the graph $G$ for which the induced subgraph of $G$ on $N_G(v)$ is a complete graph, is called a \textbf{simplicial vertex}. By $N_G(v)$, we mean the set of all neighbors (i.e. adjacent vertices) of the vertex $v$ in $G$.

A vertex $v$ of the graph $G$ whose deletion from the graph, implies a graph with more connected components than $G$, is called a \textbf{cut point} of $G$.

Let $G$ be a graph and $e=\{v,w\}$ an edge of it. If $\{e_1,\ldots,e_t\}$ is a set of edges of $G$, then by $G\setminus \{e_1,\ldots,e_t\}$, we mean the graph on the same vertex set as $G$ in which the edges $e_1,\ldots,e_t$ are omitted. Here, we simply write $G\setminus e$, instead of $G\setminus \{e\}$.

Let $G=(V,E)$ be a graph and $v,w$ be two vertices of $G$, and assume that $e=\{v,w\}$ is not an edge of $G$. Then $G\cup e$ is the graph on the same vertex set as $G$ and the edge set $E\cup \{e\}$. Moreover, as it was used in \cite{MSh}, $G_e$ is defined to be the graph on the vertex set $V$, and the edge set $E \cup \{\{x,y\}~:~x,y\in N_G(v)~\mathrm{or}~x,y\in N_G(w)\}$. \\

Let $G$ and $H$ be two graphs on $[m]$ and $[n]$, respectively. We denote by $G*H$, the \textbf{join product} of two graphs $G$ and $H$, that is
the graph with vertex set $[m]\cup [n]$, and the edge set $E(G)\cup E(H)\cup \{\{v,w\}~:~v\in [m],~w\in [n]\}$. Let $V$ be a set. To simplify our notation throughout this paper, we introduce the \textbf{join} of two collection of subsets of $V$, $\mathcal{A}$ and $\mathcal{B}$, denoted by $\mathcal{A}\circ \mathcal{B}$, as $\{A\cup B: A\in \mathcal{A}, B\in \mathcal{B}\}$. If $\mathcal{A}_1,\ldots,\mathcal{A}_t$ are collections of subsets of $V$, then we denote their join, by $\bigcirc_{i=1}^{t}\mathcal{A}_i$. \\

Suppose that $G$ is a graph on $[n]$. Let $T$ be a subset of $[n]$, and let $G_1,\ldots,G_{c_G(T)}$ be the connected
components of $G_{[n]\setminus T}$, the induced subgraph of $G$ on $[n]\setminus T$. For each $G_i$, we denote by $\widetilde{G}_i$ the complete graph on the vertex set $V(G_i)$. If there is no confusion, then we may simply write $c(T)$ instead of $c_G(T)$. Set $$P_T(G)=(\bigcup_{i\in T}\{x_i,y_i\}, J_{\widetilde{G}_1},\ldots,J_{\widetilde{G}_{c(T)}}).$$ Then, $P_T(G)$ is a prime ideal, where $\mathrm{height}\hspace{0.35mm}P_T(G)=n+|T|-c(T)$, by \cite[Lemma~3.1]{HHHKR}. Moreover, $J_G=\bigcap_{T\subset [n]}P_T(G)$, by \cite[Theorem~3.2]{HHHKR}. So that, $\mathrm{dim}\hspace{0.35mm}S/J_G=\mathrm{max}\{n-|T|+c(T):T\subset [n]\}$, by \cite[Cororally~3.3]{HHHKR}. If each $i\in T$ is a cut point of the graph $G_{([n]\setminus T)\cup \{i\}}$, then we say that $T$ has \textbf{cut point property} for $G$. Let $\mathcal{C}(G)=\{\emptyset\}\cup \{T\subset [n]:T~\mathrm{has~cut~point~property~for}~G\}$. One has $\mathcal{C}(G)=\{\emptyset\}$ if and only if $G$ is a complete graph. Denoted by $\mathcal{M}(G)$, we mean the set of all minimal prime ideals of $J_G$. Then, one has $T\in \mathcal{C}(G)$ if and only if $P_T(G)\in \mathcal{M}(G)$, by \cite[Corollary~3.9]{HHHKR}.

\section{ Matsuda and Murai's conjecture }\label{Murai}

\noindent In this section, we prove the conjecture of Matsuda and Murai on the Castelnuovo-Mumford regularity of the binomial edge ideal of a graph. In the sequel, we use the following proposition.

\begin{Proposition}\label{reg-chordal}
Let $H$ be a graph and $e$ be an edge of $H$. Then we have
\begin{itemize}
\item[\em (a)] $\mathrm{reg}(J_H)\leq \mathrm{max}\{\mathrm{reg}(J_{H\setminus e}), \reg(J_{H\setminus e}:f_e)+1\}$;
\item[\em (b)] $\mathrm{reg}(J_{H\setminus e})\leq \mathrm{max}\{\mathrm{reg}(J_H), \reg(J_{H\setminus e}:f_e)+2\}$;
\item[\em (c)] $\reg(J_{H\setminus e}:f_e)+2\leq \mathrm{max}\{\mathrm{reg}(J_{H\setminus e}), \mathrm{reg}(J_{H})+1\}$.
\end{itemize}
\end{Proposition}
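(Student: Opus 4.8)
The key input is the standard short exact sequence relating $H$, $H\setminus e$, and the colon ideal. Writing $f_e = f_{ij}$ for the binomial attached to the edge $e=\{i,j\}$, we have $J_H = J_{H\setminus e} + (f_e)$, so there is an exact sequence of graded $S$-modules
\[
0 \To S/(J_{H\setminus e}:f_e)(-2) \xrightarrow{\,\cdot f_e\,} S/J_{H\setminus e} \To S/J_H \To 0,
\]
where the shift by $-2$ comes from the fact that $f_e$ is homogeneous of degree $2$. My plan is to extract all three inequalities (a), (b), (c) purely formally from this one sequence, using the elementary behavior of Castelnuovo--Mumford regularity along short exact sequences. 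Recall the three standard facts: if $0\to A\to B\to C\to 0$ is exact, then (i) $\reg B \le \max\{\reg A,\reg C\}$; (ii) $\reg A \le \max\{\reg B, \reg C + 1\}$; (iii) $\reg C \le \max\{\reg A - 1, \reg B\}$. I will apply these with $A = S/(J_{H\setminus e}:f_e)(-2)$, $B = S/J_{H\setminus e}$, $C = S/J_H$, and then translate between the regularity of a module and the regularity of the corresponding ideal via $\reg(J) = \reg(S/J) + 1$, being careful with the degree shift, which contributes $+2$ to the regularity of $A$ relative to $S/(J_{H\setminus e}:f_e)$.

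Carrying this out: for (a), fact (i) gives $\reg(S/J_H) \le \max\{\reg(S/(J_{H\setminus e}:f_e)) + 2,\ \reg(S/J_{H\setminus e})\}$; adding $1$ to both sides and rewriting in terms of ideals yields $\reg(J_H) \le \max\{\reg(J_{H\setminus e}:f_e)+2,\ \reg(J_{H\setminus e})\}$. Hmm---that is slightly weaker than the claimed $\reg(J_{H\setminus e}:f_e)+1$ in (a). So the real content of (a) is that one can \emph{improve} the bound by one: this uses the extra structure of the map, not just the exact sequence. The standard trick here (as in Matsuda--Murai, or Herzog--Hibi's book) is that the connecting homomorphism $\Tor_i(S/J_H,K)_j \to \Tor_{i-1}(S/(J_{H\setminus e}:f_e)(-2),K)_j$ and the map $\Tor_i(S/(J_{H\setminus e}:f_e)(-2),K)_j \to \Tor_i(S/J_{H\setminus e},K)_j$ fit together so that a nonzero Betti number of $S/J_H$ in degree $j-i = r$ either comes from $S/J_{H\setminus e}$ in the same degree, or from $S/(J_{H\setminus e}:f_e)$ shifted---but one checks the shift contributes only $+1$ to the ideal regularity because of how the long exact sequence in $\Tor$ interacts with the degree-$2$ shift and the homological degree shift simultaneously. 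I expect this sharpening to be the main obstacle: it requires a careful bookkeeping argument on the long exact sequence in $\Tor$ (or equivalently on local cohomology), rather than the black-box inequalities.

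Inequalities (b) and (c), by contrast, should follow directly. For (b), fact (ii) applied to the sequence gives $\reg(A) \le \max\{\reg(B), \reg(C)+1\}$, i.e. $\reg(S/(J_{H\setminus e}:f_e)) + 2 \le \max\{\reg(S/J_{H\setminus e}),\ \reg(S/J_H)+1\}$; this is exactly (c) after rewriting $\reg(S/J_{H\setminus e}) = \reg(J_{H\setminus e}) - 1$ and $\reg(S/J_H) = \reg(J_H) - 1$ and cancelling. For (b) itself, fact (iii) gives $\reg(C) \le \max\{\reg(A)-1, \reg(B)\}$, that is $\reg(S/J_{H\setminus e}) \le \max\{\reg(S/(J_{H\setminus e}:f_e))+2-1,\ \ldots\}$---wait, I need the roles reversed. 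Since (b) bounds $\reg(J_{H\setminus e})$, I should instead read the sequence to bound $\reg(B)$: but fact (i) only gives an upper bound on $\reg(B)$ in terms of $\reg(A)$ and $\reg(C)$, namely $\reg(S/J_{H\setminus e}) \le \max\{\reg(S/(J_{H\setminus e}:f_e))+2,\ \reg(S/J_H)\}$, which after the $+1$ shift is precisely (b). So in fact (a) is the subtle one and (b), (c) are immediate consequences of facts (i) and (ii) respectively; I would present (b) and (c) first as warm-ups and then devote the bulk of the argument to the refinement needed for (a).
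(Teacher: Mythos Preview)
Your overall approach---the short exact sequence
\[
0 \to S/(J_{H\setminus e}:f_e)(-2) \xrightarrow{\cdot f_e} S/J_{H\setminus e} \to S/J_H \to 0
\]
together with the three standard regularity inequalities---is exactly what the paper does (it simply cites Peeva, \emph{Graded Syzygies}, Corollary~18.7). Parts (b) and (c) you handle correctly.

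The gap is in your treatment of (a). You claim (a) needs a refinement beyond the black-box inequalities and sketch a Tor-level argument, but this is unnecessary: (a) is an immediate consequence of your fact~(iii), $\reg C \le \max\{\reg A - 1, \reg B\}$. With $A = S/(J_{H\setminus e}:f_e)(-2)$, $B = S/J_{H\setminus e}$, $C = S/J_H$, this reads
\[
\reg(S/J_H) \le \max\{\reg(S/(J_{H\setminus e}:f_e)) + 2 - 1,\ \reg(S/J_{H\setminus e})\},
\]
and after adding $1$ and passing to ideals you get exactly (a). Your initial attempt at (a) invoked ``fact~(i)'', but fact~(i) bounds $\reg B$, not $\reg C$; you were simply applying the wrong inequality, not discovering a genuine obstruction. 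You even wrote down fact~(iii) later when trying (b), noticed it bounds $\reg C$, and then abandoned it---that was the moment to go back and finish (a). No Tor bookkeeping is needed; all three parts drop out of the three standard inequalities with no further work.
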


\begin{proof}
It suffices to consider the short exact sequence $$0\longrightarrow S/(J_{H\setminus e}:f_e)(-2)\stackrel{f_e} \longrightarrow S/J_{H\setminus e}\longrightarrow S/J_H\rightarrow 0.$$ Then, the statement follows by \cite[Corollary~18.7]{P}.
\end{proof}

We also benefit from the following theorem which appeared in \cite[Theorem~3.7]{MSh}, and gives a system of generators of the ideal $J_{G\setminus e}:f_e$, explicitly.

\begin{Theorem}\label{M-colon2}
\cite[Theorem~3.7]{MSh} Let $G$ be a graph and $e=\{i,j\}$ be an edge of $G$. Then we have
$$J_{G\setminus e}:f_e=J_{{(G\setminus e)}_e}+I_G,$$
where $I_G=(g_{P,t}~:~P:i,i_1,\ldots,i_s,j~\mathrm{is~a~path~between~}i,j~\mathrm{in}~G~\mathrm{and}~0\leq t\leq s)$, $g_{P,0}=x_{i_1}\cdots x_{i_s}$ and for every $1\leq t\leq s$, $g_{P,t}=y_{i_1}\cdots y_{i_t}x_{i_{t+1}}\cdots x_{i_s}$.
\end{Theorem}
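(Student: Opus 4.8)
The plan is to prove the two inclusions of the asserted equality $J_{G\setminus e}:f_e=L$ separately, where I write $e=\{i,j\}$, $f_e=f_{ij}=x_iy_j-x_jy_i$, set $L:=J_{(G\setminus e)_e}+I_G$, and adopt for arbitrary indices the convention $f_{ab}=x_ay_b-x_by_a$, so that $f_{ab}=-f_{ba}$ and the Pl\"ucker-type identities $x_bf_{ac}=x_af_{bc}+x_cf_{ab}$ and $y_bf_{ac}=y_af_{bc}+y_cf_{ab}$ hold formally in $S$. Recall that $i$ and $j$ are non-adjacent in $G\setminus e$, that $(G\setminus e)_e$ is obtained from $G\setminus e$ by turning $N_{G\setminus e}(i)$ and $N_{G\setminus e}(j)$ into cliques, and in particular that $J_{G\setminus e}\subseteq J_{(G\setminus e)_e}\subseteq L$.

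For the inclusion $L\subseteq J_{G\setminus e}:f_e$ it suffices to multiply each generator of $L$ into $J_{G\setminus e}$ by $f_e$. If $\{k,\ell\}\in E(G\setminus e)$ then $f_{k\ell}\in J_{G\setminus e}$ and there is nothing to prove. If $\{k,\ell\}$ is a new edge of $(G\setminus e)_e$, say $k,\ell\in N_{G\setminus e}(i)$, then $f_{ik},f_{i\ell}\in J_{G\setminus e}$, and the Pl\"ucker identities give $x_if_{k\ell}=x_kf_{i\ell}-x_\ell f_{ik}\in J_{G\setminus e}$ and $y_if_{k\ell}\in J_{G\setminus e}$, whence $f_ef_{k\ell}=y_j(x_if_{k\ell})-x_j(y_if_{k\ell})\in J_{G\setminus e}$ (and symmetrically for $k,\ell\in N_{G\setminus e}(j)$, using $x_jf_{k\ell},y_jf_{k\ell}\in J_{G\setminus e}$ and $f_ef_{k\ell}=x_i(y_jf_{k\ell})-y_i(x_jf_{k\ell})$). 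For a generator $g_{P,t}$ of $I_G$ I would prove, by induction on $m$, the statement: for any path $w_0,w_1,\dots,w_m$ in a graph $H$ and any choice of symbols $\delta_1,\dots,\delta_{m-1}\in\{x,y\}$, the product $\delta_1(w_1)\cdots\delta_{m-1}(w_{m-1})\,f_{w_0w_m}$ lies in $J_H$, where $\delta_r(w_r)$ denotes $x_{w_r}$ or $y_{w_r}$ accordingly. The case $m=1$ is $f_{w_0w_1}\in J_H$; for the step, apply the identity $\delta_{m-1}(w_{m-1})f_{w_0w_m}=\delta_{m-1}(w_0)f_{w_{m-1}w_m}+\delta_{m-1}(w_m)f_{w_0w_{m-1}}$, multiply by $\delta_1(w_1)\cdots\delta_{m-2}(w_{m-2})$, and note that the first summand lies in $J_H$ since $f_{w_{m-1}w_m}\in J_H$, and the second by the inductive hypothesis applied to the path $w_0,\dots,w_{m-1}$. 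Taking $H=G\setminus e$, the path $i,i_1,\dots,i_s,j$ (which, being an $i$--$j$ path, uses no edge incident with both $i$ and $j$, hence is a path in $G\setminus e$), $\delta_r=y$ for $r\le t$, and $\delta_r=x$ for $r>t$, yields $g_{P,t}f_{ij}\in J_{G\setminus e}$.

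The inclusion $J_{G\setminus e}:f_e\subseteq L$ is the heart of the matter. Since $J_{G\setminus e}$ is radical with minimal primes exactly the $P_T(G\setminus e)$, $T\in\mathcal{C}(G\setminus e)$, by \cite[Corollary~3.9]{HHHKR}, one has $J_{G\setminus e}:f_e=\bigcap\{P_T(G\setminus e) : T\in\mathcal{C}(G\setminus e),\ f_{ij}\notin P_T(G\setminus e)\}$, and from the shape of $P_T(G\setminus e)$ one reads off that $f_{ij}\notin P_T(G\setminus e)$ if and only if $i,j\notin T$ and $i,j$ lie in distinct connected components of $(G\setminus e)_{[n]\setminus T}$. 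Given the containment $L\subseteq J_{G\setminus e}:f_e$ just proved, it therefore remains to show that $L$ is a radical ideal whose minimal primes are precisely these $P_T(G\setminus e)$. I would split this into: (i) $\sqrt{L}$ equals the displayed intersection, by checking $V(L)=\bigcup\{V(P_T(G\setminus e)):f_{ij}\notin P_T(G\setminus e)\}$ --- the inclusion $\supseteq$ is immediate from $L\subseteq J_{G\setminus e}:f_e$, while for $\subseteq$ one shows a point of $V(L)$ lies on some such $V(P_T)$ by choosing for $T$ a suitable set of vertices at which the point vanishes, the vanishing of the path monomials $g_{P,t}$ being precisely what forces that $i$ and $j$ lie in different components after deleting $T$; and (ii) $L$ is radical, for which the natural route is to exhibit a squarefree initial ideal of $L$ with respect to the lexicographic order induced by $x_1>\cdots>x_n>y_1>\cdots>y_n$, starting from the explicit reduced Gr\"obner basis of $J_{G\setminus e}$ in terms of admissible paths from \cite{HHHKR} and adjoining the generators coming from $(G\setminus e)_e$ and $I_G$. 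Once $L$ is known to be radical with this support, the two inclusions give $L=J_{G\setminus e}:f_e$.

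The main obstacle is exactly this second inclusion. The Pl\"ucker computations are entirely formal but yield only one containment, whereas the combinatorial identification of the minimal primes of $L$, although intuitively transparent, has to be promoted to an equality of ideals; this forces one to control the primary decomposition of $L$ tightly --- in particular to rule out embedded primes and, more delicately, to show that $L$ is reduced. The squarefree--initial--ideal argument (equivalently, a direct normal-form argument verifying that every $h$ with $hf_{ij}\in J_{G\setminus e}$ reduces to zero modulo a Gr\"obner basis of $L$) is where essentially all the work of the theorem is concentrated.
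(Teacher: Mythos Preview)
The paper does not contain a proof of this theorem at all: it is quoted from \cite[Theorem~3.7]{MSh} and used as a black box, so there is no ``paper's own proof'' to compare your proposal against.

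As for the proposal itself, your treatment of the inclusion $L\subseteq J_{G\setminus e}:f_e$ is correct and complete; the Pl\"ucker identities and the path induction are exactly the right tools. The reverse inclusion, however, is only a plan. You correctly identify $J_{G\setminus e}:f_e$ as the intersection of those $P_T(G\setminus e)$ with $f_{ij}\notin P_T(G\setminus e)$, and you correctly reduce the problem to showing that $L$ is radical with precisely these minimal primes --- but you do not carry this out. Step~(i), identifying $V(L)$, is asserted rather than argued (the phrase ``choosing for $T$ a suitable set of vertices at which the point vanishes'' hides the actual combinatorics), and step~(ii), the radicality of $L$ via a squarefree initial ideal, is left entirely as a programme. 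You yourself flag that this is ``where essentially all the work of the theorem is concentrated,'' which is honest but means the proposal is an outline, not a proof. If you want to complete it along these lines you would need to either produce an explicit Gr\"obner basis of $L$ with squarefree leading terms, or consult \cite{MSh} directly for their argument.
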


The following lemma is also needed to prove the main theorem.

\begin{Lemma}\label{colon3}
Let $G$ be a graph on $[n]$, $v$ a simplicial vertex of $G$ with $\mathrm{deg}_{G}(v)\geq 2$, and $e$ an edge incident with $v$. Then we have $\reg(J_{G\setminus e}:f_e)\leq n-2$.
\end{Lemma}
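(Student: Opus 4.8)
The plan is to use the explicit description of $J_{G\setminus e}:f_e$ given in Theorem~\ref{M-colon2}, namely $J_{G\setminus e}:f_e = J_{(G\setminus e)_e} + I_G$, and to bound the regularity of this ideal by passing to a Gr\"obner-basis-style or direct linear-algebra reduction using the simplicial hypothesis on $v$. Write $e = \{v,w\}$ with $w \in N_G(v)$; since $v$ is simplicial, $N_G(v)$ induces a clique, and in particular $w$ is adjacent in $G$ to every other neighbor of $v$. The key structural observation is that in $(G\setminus e)_e$ the vertex $v$ acquires as neighbors all of $N_G(v)\setminus\{v\}$ and all of $N_G(w)\setminus\{w\}$ — but since $N_G(v)\setminus\{w\}$ is already a clique containing $v$'s old neighbors, and those neighbors are joined to $N_G(w)$, a good deal of collapsing happens. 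I would first analyze exactly which edges $(G\setminus e)_e$ has around $v$ and $w$, and observe that every induced path between $v$ and $w$ in $G$ that is relevant to $I_G$ must pass through $N_G(v)\cap N_G(w)$, so the generators $g_{P,t}$ of $I_G$ are controlled by a small vertex set.

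Concretely, I would try to show that $J_{G\setminus e}:f_e$ can be written (after possibly a change of the "$x,y$"-variables local to $v$) as $J_{G'} + (\text{some monomials in a bounded number of variables})$, or better, identify it with the binomial edge ideal of an auxiliary graph on $n$ vertices together with a linear form or a single variable, so that one of the standard reduction tools applies. The cleanest route is probably induction: use Proposition~\ref{reg-chordal}(a) applied to a cleverly chosen edge $e'$ of $(G\setminus e)_e$ incident to $v$ to peel off one vertex, reducing to a graph on $n-1$ vertices, for which the trivial Matsuda–Murai bound $\reg(J_{H})\le n-1$ (their Theorem~\ref{Matsuda-Murai} applied to $n-1$ vertices) already gives what we need, provided the colon term contributes at most $n-2$. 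Alternatively, since $v$ is simplicial in $G$ with $\deg_G(v)\ge 2$, one can contract or delete $v$ and relate $J_{G\setminus e}:f_e$ to an ideal on fewer vertices whose regularity is bounded by Theorem~\ref{Matsuda-Murai}; the inequality $\reg(J_{(G\setminus e)_e} + I_G) \le n-2$ should then follow because effectively only $n-1$ "real" vertices carry binomial generators while the monomial part $I_G$ lives in a clique and adds at most a controlled amount.

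The main obstacle I expect is bookkeeping the interaction between the binomial part $J_{(G\setminus e)_e}$ and the monomial part $I_G$ in Theorem~\ref{M-colon2}: regularity is not additive and the sum of two ideals with individually small regularity can a priori be large, so one genuinely needs the simplicial structure to force the monomial generators $g_{P,t}$ to lie inside a clique whose variables are "used up" by the binomial part, allowing a Mayer–Vietoris / short-exact-sequence argument $0 \to S/(A:B)(-\deg) \to S/A \oplus S/B \to S/(A+B) \to 0$ with $A = J_{(G\setminus e)_e}$, $B = I_G$ to close. Handling the colon ideal $A : B$ in that sequence, and checking the degree shifts add up to at most $n-2$ rather than $n-1$, is the delicate point; the hypothesis $\deg_G(v)\ge 2$ is what prevents the degenerate case (a leaf $v$, where $I_G$ would be a single variable $x$ and the bound would only give $n-1$) and must be used precisely here.
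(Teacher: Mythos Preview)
Your proposal circles the right ingredients but misses the one computation that makes the lemma immediate, and as a result you are setting up machinery (Mayer--Vietoris, induction via Proposition~\ref{reg-chordal}(a), colon ideals $A:B$) that is not needed and that you would likely not be able to close. The point is that $I_G$ is not merely ``controlled by a small vertex set'': it is \emph{equal} to the linear ideal $(x_{v_i},y_{v_i}:1\le i\le t-1)$, where $v_1,\dots,v_{t-1}$ are the neighbors of $v$ other than $w=v_t$. Indeed, since $v$ is simplicial each $v_i$ is adjacent to $w$, so $v,v_i,w$ is a length-two path in $G$ and the corresponding monomials $g_{P,0}=x_{v_i}$ and $g_{P,1}=y_{v_i}$ are generators of $I_G$; every other path from $v$ to $w$ in $G\setminus e$ must start through some $v_i$, so its monomials are already multiples of these. (You also misread the construction of $(G\setminus e)_e$: it completes the neighborhoods $N_{G\setminus e}(v)$ and $N_{G\setminus e}(w)$ into cliques, it does not add edges from $v$ to $N_G(w)$; in particular $v$'s neighbors in $(G\setminus e)_e$ are exactly $v_1,\dots,v_{t-1}$.)

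Once $I_G$ is identified as this ideal of variables, there is no interaction problem to worry about: adding $(x_{v_i},y_{v_i})$ to $J_{(G\setminus e)_e}$ simply kills the vertices $v_1,\dots,v_{t-1}$, and after that $v$ is isolated, so $J_{G\setminus e}:f_e = J_{H'} + (x_{v_i},y_{v_i}:1\le i\le t-1)$ where $H'$ is the induced subgraph of $(G\setminus e)_e$ on $[n]\setminus\{v,v_1,\dots,v_{t-1}\}$. Hence $\reg(J_{G\setminus e}:f_e)=\reg(J_{H'})$, and since $H'$ has $n-t\le n-2$ vertices, Theorem~\ref{Matsuda-Murai} gives $\reg(J_{H'})\le n-t\le n-2$ directly. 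This is exactly where $\deg_G(v)\ge 2$ enters: it guarantees $t\ge 2$, so at least one $v_i$ besides $w$ exists and the vertex count drops by at least two.
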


\begin{proof}
Let $v_1,\ldots,v_t$ be all the neighbors of the simplicial vertex $v$, and $e_1,\ldots,e_t$ be the edges joining $v$ to $v_1,\ldots,v_t$, respectively, where $t\geq 2$. Without loss of generality, assume that $e:=e_t$. Note that for each $i=1,\ldots,t-1$, $v,v_i,v_t$ is a path between $v$ and $v_t$ in $G$, so that for all $i=1,\ldots,t-1$, $x_i$ and $y_i$ are in the minimal monomial set of generators of $I_G$. Also, all other paths between $v$ and $v_t$ in $G\setminus e$ contain $v_i$ for some $i=1,\ldots,t-1$. Thus, all the monomials corresponding to these paths, are divisible by either $x_i$ or $y_i$ for some $i=1,\ldots,t-1$. Hence, we have
$I_G=(x_i,y_i:1\leq i\leq t-1)$. So that $J_{G\setminus e}:f_e=J_{{(G\setminus e)}_e}+(x_i,y_i:1\leq i\leq t-1)$. The binomial generators of $J_{{(G\setminus e)}_e}$ corresponding to the edges containing vertices $v_1,\ldots,v_{t-1}$, are contained in $I_G$. Let $H:={(G\setminus e)}_e$. Then, we have $J_{G\setminus e}:f_e=J_{H_{[n]\setminus \{v,v_1,\ldots,v_{t-1}\}}}+(x_i,y_i:1\leq i\leq t-1)$, since $v$ is an isolated vertex of $H_{[n]\setminus \{v_1,\ldots,v_{t-1}\}}$. Thus, $\reg(J_{G\setminus e}:f_e)=\reg(J_{H_{[n]\setminus \{v,v_1,\ldots,v_{t-1}\}}})$. But, $\reg(J_{H_{[n]\setminus \{v,v_1,\ldots,v_{t-1}\}}})\leq n-2$, by \ref{Matsuda-Murai}, since $t\geq 2$. Therefore, $\reg(J_{G\setminus e}:f_e)\leq n-2$, as desired.
\end{proof}

\medskip
Now we are ready to prove the main result of this paper, which is Conjecture~B. To simplify the notation, for any graph $G$, we define ${\alpha}_G=\min\{{\alpha}_G(v):v\in V(G)\}$, where ${\alpha}_G(v)$ is defined to be ${\deg_{G}(v)\choose 2}-|E(G_{N(v)})|$. 

Note that ${\alpha}_G=0$ is equivalent to saying that $G$ has a simplicial vertex. For example, let $G$ be the graph which is shown in Figure~\ref{alpha}. Then we have $\alpha_G(1)=\alpha_G(5)=0$, since the vertices $1$ and $5$ are both simplicial vertices. Moreover, $\alpha_G(3)=\alpha_G(4)=1$, and $\alpha_G(2)=2$. Therefore, we have $\alpha_G=0$.

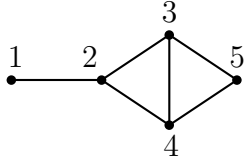
\begin{figure}[hbt]
\begin{center}
\psset{unit=0.6cm}
\begin{pspicture}(-1.5,-0.5)(9.8,2.5)
\psdots(1.5,1)(3.5,1)(5,0)(6.5,1)(5,2)
\psline(1.5,1)(3.5,1)
\psline(3.5,1)(5,0)
\psline(5,2)(3.5,1)
\psline(5,0)(5,2)
\psline(5,0)(6.5,1)
\psline(6.5,1)(5,2)
\uput[120](3.5,1){$2$}
\uput[90](5,2){$3$}
\uput[120](1.85,1){$1$}
\uput[140](5.45,-0.85){$4$}
\uput[90](6.5,1){$5$}
\end{pspicture}
\end{center}
\caption{A graph with $\alpha_G=0$} \label{alpha}
\end{figure}

\begin{Theorem}
For any graph $G\neq P_n$ with $n$ vertices, $\mathrm{reg}(J_G)\leq n-1$.
\end{Theorem}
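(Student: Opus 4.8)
The proof will be by induction on $n$, using the structural tools assembled in this section. The base cases ($n$ small) are either trivial or covered by known classifications. For the inductive step, I would split into two cases according to whether $G$ has a simplicial vertex, i.e. whether $\alpha_G=0$.

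\textbf{Case $\alpha_G=0$.} Here $G$ has a simplicial vertex $v$. If $\deg_G(v)\le 1$, then $v$ is a free vertex or an isolated vertex, and deleting $v$ (or its unique edge) reduces to a graph on fewer vertices: one uses the fact that adding a pendant edge or an isolated vertex changes the regularity in a controlled way (here I would invoke the known behavior of $\reg(J_G)$ under attaching a whisker/pendant, together with the induction hypothesis), being careful that the reduced graph is not forced to be a path exactly when $G$ itself is a path. If $\deg_G(v)\ge 2$, let $e$ be an edge incident with $v$. Then Proposition~\ref{reg-chordal}(a) gives
$$\reg(J_G)\le \max\{\reg(J_{G\setminus e}),\ \reg(J_{G\setminus e}:f_e)+1\}.$$
By Lemma~\ref{colon3}, $\reg(J_{G\setminus e}:f_e)\le n-2$, so the second term is $\le n-1$. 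For the first term, $G\setminus e$ still has $n$ vertices, but now $v$ has degree $\deg_G(v)-1\ge 1$ in $G\setminus e$ and is still simplicial there; if $\deg_G(v)-1=1$ we are in the pendant situation above, and if it is still $\ge 2$ we can iterate, peeling off edges at $v$ one at a time. The key point is that at each stage either we land on a smaller graph via the pendant reduction, or the colon term is bounded by $n-2$ via Lemma~\ref{colon3}; and one must check that $G\setminus e$ is a path only when this does not cause trouble (a path has no simplicial vertex of degree $\ge 2$, so this subtlety only arises at the last peel, where the pendant argument takes over).

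\textbf{Case $\alpha_G\ge 1$.} Now $G$ has no simplicial vertex. Pick a vertex $v$ with $\alpha_G(v)=\alpha_G\ge 1$, so there exist two neighbors $x,y$ of $v$ with $e=\{x,y\}\notin E(G)$. Consider $G$, $G\cup e$, and $(G\cup e)\setminus e=G$ — more precisely, apply Proposition~\ref{reg-chordal} to the graph $H=G\cup e$ and the edge $e$, so that $H\setminus e=G$. Part (b) gives
$$\reg(J_G)\le \max\{\reg(J_{G\cup e}),\ \reg(J_G:f_e)+2\}.$$
The point of choosing $e$ inside the neighborhood of $v$ is that in $(G)_e$ — equivalently $H\setminus e$ with the neighborhood-completion — the colon ideal $J_G:f_e=J_{(G)_e}+I_G$ simplifies, because every path from $x$ to $y$ in $G$ can be taken to pass through $v$, forcing $I_G$ to contain $x_v,y_v$ (the generator from the length-two path $x,v,y$), which collapses the description of the colon ideal substantially; one should be able to bound $\reg(J_G:f_e)\le n-3$ by an argument parallel to Lemma~\ref{colon3}, whence the second term is $\le n-1$. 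For the first term, $G\cup e$ has strictly more edges than $G$; by repeatedly adding such "missing neighborhood edges" we eventually reach a graph in which every vertex is simplicial, i.e.\ a disjoint union of complete graphs, whose binomial edge ideal has small regularity — and along the way each colon term stays bounded by $n-3$. So after finitely many steps we reduce to Case $\alpha=0$ or to a decomposable graph handled directly.

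\textbf{Main obstacle.} The delicate point is bookkeeping the induction so that the bound $n-1$ (rather than $n$) is genuinely achieved, i.e.\ never letting Theorem~\ref{Matsuda-Murai} alone be the final word. Concretely: in Case $\alpha_G\ge1$ one must verify that $\reg(J_G:f_e)\le n-3$ — a strictly better bound than the $n-2$ of Lemma~\ref{colon3}, needed because this colon term enters with a "$+2$" rather than a "$+1$". This is where the hypothesis "$v$ is a non-simplicial vertex" with a chosen non-edge $e$ in $N(v)$ must be exploited: the completed graph $(G)_e$ has $v$ together with enough of its neighborhood collapsed that, after killing $x_v,y_v$ and the $x_u,y_u$ for the relevant $u$, what remains is a binomial edge ideal on at most $n-3$ vertices. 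Pinning down exactly which variables get killed — and confirming the residual graph really has $\le n-3$ vertices in every configuration — is the technical heart of the argument; everything else is an orchestration of Proposition~\ref{reg-chordal} and the induction hypothesis.
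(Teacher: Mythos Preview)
Your treatment of the case $\alpha_G=0$ is essentially the paper's argument: peel off edges at a simplicial vertex using Proposition~\ref{reg-chordal}(a) and Lemma~\ref{colon3} until the vertex becomes a leaf, then handle the leaf case by induction via Theorem~\ref{M-colon2}. That part is fine.

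The gap is in the case $\alpha_G\ge 1$, precisely at the ``main obstacle'' you flag. You correctly set up Proposition~\ref{reg-chordal}(b) with $e=\{v_1,v_2\}$ a non-edge inside $N_G(v)$, and you correctly note that the path $v_1,v,v_2$ forces $x_v,y_v\in I_{G\cup e}$, so that $\reg(J_G:f_e)=\reg(J_{G\setminus v}:f_e)$. But your proposed bound $\reg(J_G:f_e)\le n-3$ via ``what remains is a binomial edge ideal on at most $n-3$ vertices'' does not hold: after killing $x_v,y_v$ you are left with $J_{(G\setminus v)_e}+I_{(G\setminus v)\cup e}$, which is a colon ideal, not a binomial edge ideal --- the monomials coming from longer paths between $v_1$ and $v_2$ need not be linear, so no further variables are forced to die. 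In particular only the common neighbors of $v_1$ and $v_2$ contribute linear generators to $I$, and there may be just one of them (namely $v$); you cannot get down to $n-3$ vertices this way, and \ref{Matsuda-Murai} applied to whatever graph remains gives at best $n-1$, which after the ``$+2$'' is useless.

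The paper closes this gap by a different device you do not mention: it applies Proposition~\ref{reg-chordal}(c) to the graph $(G\setminus v)\cup e$ and the edge $e$, obtaining
\[
\reg(J_{G\setminus v}:f_e)+2\ \le\ \max\bigl\{\reg(J_{G\setminus v}),\ \reg(J_{(G\setminus v)\cup e})+1\bigr\}.
\]
Now both terms on the right are regularities of genuine binomial edge ideals. The first is $\le n-1$ by \ref{Matsuda-Murai}. The second requires $\reg(J_{(G\setminus v)\cup e})\le n-2$, i.e.\ the induction hypothesis on a graph with $n-1$ vertices --- and for that one must verify that $(G\setminus v)\cup e$ is not a path, which the paper does by a short but non-obvious case analysis exploiting the choice $\alpha_G(v)=\alpha_G$. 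This use of part~(c) to trade a colon-ideal bound for two binomial-edge-ideal bounds is the missing idea in your outline.
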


\begin{proof}
We first prove the theorem for a graph containing a simplicial vertex, or equivalently $\alpha_G=0$. For this, we use induction on the number of the vertices. If $n=2$, then $G$ consists of just two isolated vertices, and hence clearly $J_G=(0)$, and we are done. Let $G$ be a graph on $[n]$, with a simplicial vertex, and assume that $G$ is not a path. We consider two following cases:

Case(i). Suppose that $G$ has a simplicial vertex which is a leaf, say $v$. Then, assume that $w$ is the only neighbor of $v$, and $e=\{v,w\}$ is the edge joining $v$ and $w$. We have $\mathrm{reg}(J_{G\setminus e})=\mathrm{reg}(J_{{(G\setminus e)}_{[n]\setminus v}})$, since $v$ is an isolated vertex of $G\setminus e$. Thus, by \ref{Matsuda-Murai}, $\mathrm{reg}(J_{G\setminus e})\leq n-1$. On the other hand, we have $\reg(J_{G\setminus e}:f_e)=\reg(J_{{(G\setminus e)}_e})$, by \ref{M-colon2}. Note that $v$ is also an isolated vertex of ${(G\setminus e)}_e$, so that we can disregard it in computing the regularity. Thus, $\reg(J_{{(G\setminus e)}_e})\leq n-2$, by the induction hypothesis, since ${(G\setminus e)}_e$ has $w$ as a simplicial vertex. Hence, $\reg(J_{G\setminus e}:f_e)+1\leq n-1$. Thus, by \ref{reg-chordal} (a), we get $\reg(J_G)\leq n-1$.

Case(ii). Suppose that all the simplicial vertices of $G$ have degree greater than one. Let $v$ be a simplicial vertex of $G$ and $v_1,\ldots,v_t$ be all the neighbors of $v$, and $e_1,\ldots,e_t$ be the edges joining $v$ to $v_1,\ldots,v_t$, respectively, where $t\geq 2$. By \ref{reg-chordal} (a) and \ref{colon3}, we have $\reg(J_G)\leq \mathrm{max}\{\reg(J_{G\setminus  e_1}),n-1\}$. Then by applying \ref{reg-chordal} (a) and \ref{colon3} on the graph $G\setminus  e_1$, we get $\reg(J_G)\leq \mathrm{max}\{\reg(J_{G\setminus  \{e_1,e_2\}}),n-1\}$. Since $\mathrm{deg}_{G\setminus \{e_1,\ldots,e_{l}\}}(v)\geq 2$ for $l=1,\ldots,t-2$, we can repeat this process to obtain $\reg(J_G)\leq \mathrm{max}\{\reg(J_{G\setminus \{e_1,\ldots,e_{t-1}\}}),n-1\}$. Note that $G\setminus \{e_1,\ldots,e_{t-1}\}$ is a graph on $n$ vertices in which $v$ is a leaf. Thus, by case~(i), we have $\reg(J_{G\setminus \{e_1,\ldots,e_{t-1}\}}))\leq n-1$. Thus, $\reg(J_G)\leq n-1$.

So, the result follows for the graphs with a simplicial vertex. \\

Now, suppose on the contrary that there exists a graph $G$ on $[n]$ which does not have any simplicial vertex (in particular, $G$ is not a path) and $\mathrm{reg}(J_G)\geq n$. We may assume that $G$ has the least number of vertices, $n$, among the graphs for which the conjecture does not hold. Moreover, we assume that ${\alpha}_G$ is the minimum among the graphs on $n$ vertices with this property. Since $G$ does not contain any simplicial vertex, we have $\alpha_G\geq 1$, and hence there exists a vertex $v$ of $G$ which has two neighbors, say $v_1$ and $v_2$, which are not adjacent in $G$, and ${\alpha}_G={\alpha}_G(v)$. Let $e=\{v_1,v_2\}$. Now, by \ref{reg-chordal} (b), we have
\begin{eqnarray}\label{sequence}
\mathrm{reg}(J_G)\leq \mathrm{max}\{\mathrm{reg}(J_{G\cup e}), \reg(J_{G}:f_e)+2\}.
\end{eqnarray}
We have ${\alpha}_{G\cup e}(v)={\alpha}_G(v)-1$, and hence ${\alpha}_{G\cup e}\leq {\alpha}_{G}-1$. Since $G\cup e$ has $n$ vertices, we have $\mathrm{reg}(J_{G\cup e})\leq n-1$, by our choice of $G$. Note that $G\cup e$, as well as $G$, is not a path.

Now, we show that $\reg(J_{G}:f_e)+2\leq n-1$. By \ref{M-colon2}, we have $J_{G}:f_e=J_{G_e}+I_{G\cup e}$. Since $v_1,v,v_2$ is a path between $v_1$ and $v_2$ in $G$, we have $I_{G\cup e}=(x_v,y_v)+I_{(G\setminus v)\cup e}$, and hence $J_{G}:f_e=J_{G_e}+I_{G\cup e}=J_{{(G\setminus v)}_e}+I_{(G\setminus v)\cup e}+(x_v,y_v)$, by \ref{M-colon2}. Thus, $\reg(J_{G}:f_e)=\reg(J_{{(G\setminus v)}_e}+I_{(G\setminus v)\cup e})$. By \ref{M-colon2}, $\mathrm{reg}(J_{G\setminus v}:f_e)=\reg(J_{{(G\setminus v)}_e}+I_{(G\setminus v)\cup e})$, so that $\reg(J_{G}:f_e)=\mathrm{reg}(J_{G\setminus v}:f_e)$. On the other hand, we have
$\mathrm{reg}(J_{G\setminus v}:f_e)+2\leq \mathrm{max}\{\mathrm{reg}(J_{G\setminus v}), \reg(J_{(G\setminus v)\cup e})+1\}$, by \ref{reg-chordal} (c). By \ref{Matsuda-Murai}, we have $\reg(J_{G\setminus v})\leq n-1$. Now, it suffices to show that $\reg(J_{(G\setminus v)\cup e})\leq n-2$.

First, we claim that $(G\setminus v)\cup e$ is not a path. To prove the claim, suppose on the contrary that $(G\setminus v)\cup e$ is a path over $n-1$ vertices. Then, $G\setminus v$ is the disjoint union of two paths $P_t$ and $P_s$ on two different sets of vertices, where $t+s=n-1$. Note that $e$ joins a vertex of minimum degree of $P_t$ and a vertex of minimum degree of $P_s$, in $(G\setminus v)\cup e$. Moreover, $v$ is adjacent to these two vertices in $G$. So, if $s\leq 2$ or $t\leq 2$, then $G$ has a simplicial vertex which is a contradiction, by our choice of $G$. So, suppose that $t\geq 3$ and $s\geq 3$. Then $v$ is adjacent to both of the leaves of $P_t$ and the leaves of $P_s$ in $G$, since otherwise $G$ has a leaf, and hence a simplicial vertex which contradicts the choice of $G$. Now suppose that $u$ is a neighbor of a leaf of $P_t$, say $w$. If $u$ is adjacent to $v$, then $w$ is a simplicial vertex of $G$ which is a contradiction, because of our choice of $G$. So, suppose that $u$ is not adjacent to $v$. Then $u$ has just two neighbors in $G$ which are not adjacent to each other, and hence $\alpha_G(u)=1$. On the other hand, $\alpha_G(v)\geq 6$, because $v$ is adjacent to at least four vertices, namely the leaves of $P_t$ and $P_s$, and none of those leaves are adjacent to each other in $G$. So, we get a contradiction, since by the definition of $\alpha_G$, we have $\alpha_G=\alpha_G(v)\leq \alpha_G(u)$. Therefore, $(G\setminus v)\cup e$ is not a path and the claim follows.

Thus, by the choice of $G$, we have $\reg(J_{(G\setminus v)\cup e})\leq n-2$, since $(G\setminus v)\cup e$ has $n-1$ vertices. Therefore, by (\ref{sequence}), we get $\reg(J_G)\leq n-1$, which is a contradiction to our assumption, and the desired result follows.
\end{proof}

\section{ Regularity under the join product}\label{join}

\noindent In this section we focus on the join of two graphs and determine the regularity of the binomial edge ideal of the join of two graphs in terms of the original graphs'. Consequently, we gain some results on complete $t$-partite graphs, which generalize some previous known results.

We need the next proposition from \cite{KS} which for completeness we give a proof for it here. If $H$ is a graph with connected components $H_1,\ldots,H_r$, then we denote it by $\bigsqcup_{i=1}^r H_i$.

\begin{Proposition}\label{both disconnected 1}
Suppose that $G_1=\bigsqcup_{i=1}^r G_{1i}$ and $G_2=\bigsqcup_{i=1}^s G_{2i}$ are two graphs on disjoint sets of vertices
$[n_1]=\bigcup_{i=1}^r [n_{1i}]$ and $[n_2]=\bigcup_{i=1}^s [n_{2i}]$,
respectively, where $r,s\geq 2$. Then we have
$$\mathcal{C}(G_1*G_2)=\{\emptyset\}\cup \big{(}(\bigcirc_{i=1}^{r}\mathcal{C}(G_{1i}))\circ \{[n_2]\} \big{)}\cup \big{(}(\bigcirc_{i=1}^{s}\mathcal{C}(G_{2i}))\circ \{[n_1]\} \big{)}.$$
\end{Proposition}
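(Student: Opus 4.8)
We work throughout with the combinatorial description of $\mathcal{C}$ via the cut point property; the crucial feature of the join product to be exploited is that every vertex of $[n_1]$ is adjacent to every vertex of $[n_2]$.

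\emph{Step 1: a dichotomy.} The plan is first to show that if $\emptyset\neq T\in\mathcal{C}(G_1*G_2)$ then $T\supseteq[n_1]$ or $T\supseteq[n_2]$. The key point is that whenever $W\subseteq[n_1]\cup[n_2]$ meets both $[n_1]$ and $[n_2]$, the induced graph $(G_1*G_2)_W$ is connected: two vertices of $W$ on the same side have a common neighbour on the other side, and two on opposite sides are adjacent. Now suppose $[n_1]\setminus T$ and $[n_2]\setminus T$ are both nonempty and pick $i\in T$. Both $([n_1]\cup[n_2])\setminus T$ and $(([n_1]\cup[n_2])\setminus T)\cup\{i\}$ meet both sides, hence induce connected subgraphs, so deleting $i$ from $(G_1*G_2)_{(([n_1]\cup[n_2])\setminus T)\cup\{i\}}$ leaves one component; thus $i$ is not a cut point there and $T$ fails the cut point property. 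Since a one-vertex graph has one component while the empty graph has none, $T=[n_1]\cup[n_2]$ is excluded too. Hence exactly one of $[n_1]\subseteq T$, $[n_2]\subseteq T$ holds for a nonempty $T\in\mathcal{C}(G_1*G_2)$.

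\emph{Step 2: the two pieces.} By symmetry it suffices to describe the members of $\mathcal{C}(G_1*G_2)$ containing $[n_1]$; write such a $T$ as $T=[n_1]\cup T_2$ with $T_2=T\cap[n_2]$, and put $T_{2i}=T_2\cap[n_{2i}]$. Here $[n]\setminus T=[n_2]\setminus T_2\subseteq[n_2]$, so for a vertex $u\in[n_2]$ the graph $(G_1*G_2)_{([n_2]\setminus T_2)\cup\{u\}}$ equals $(G_2)_{([n_2]\setminus T_2)\cup\{u\}}$, and since $G_2=\bigsqcup_{i=1}^s G_{2i}$ a vertex $u\in[n_{2i_0}]$ is a cut point of it precisely when it is a cut point of $(G_{2i_0})_{([n_{2i_0}]\setminus T_{2i_0})\cup\{u\}}$, because removing a vertex changes only the component containing it. Letting $u$ range over $T_2$, the cut point property for the vertices of $T_2$ is therefore equivalent to $T_{2i}\in\mathcal{C}(G_{2i})$ for every $i=1,\dots,s$. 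It remains to treat the vertices of $[n_1]\subseteq T$: for $v\in[n_1]$ the graph $(G_1*G_2)_{([n_2]\setminus T_2)\cup\{v\}}$ is connected ($v$ is adjacent to all of $[n_2]\setminus T_2$), and deleting $v$ leaves $(G_2)_{[n_2]\setminus T_2}=\bigsqcup_{i=1}^s(G_{2i})_{[n_{2i}]\setminus T_{2i}}$, so $v$ is a cut point iff this has at least two components. I would then observe that this is automatic once each $T_{2i}\in\mathcal{C}(G_{2i})$: a one-vertex graph is never cut, so $[n_{2i}]\notin\mathcal{C}(G_{2i})$, whence $[n_{2i}]\setminus T_{2i}\neq\emptyset$ for every $i$, and as $s\geq2$ we obtain at least two components. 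Thus $[n_1]\cup T_2\in\mathcal{C}(G_1*G_2)$ iff $T_{2i}\in\mathcal{C}(G_{2i})$ for all $i$, i.e. iff $[n_1]\cup T_2\in(\bigcirc_{i=1}^s\mathcal{C}(G_{2i}))\circ\{[n_1]\}$; the symmetric argument (now using $r\geq2$) handles the members containing $[n_2]$, and restoring $T=\emptyset$ gives the asserted equality.

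\emph{Expected obstacle.} The delicate part is Step 1, i.e. pinning down — with due care for the degenerate cases in the cut point convention (isolated vertices, $T$ swallowing a whole component, $T=[n]$) — that the strong connectivity of the join forces $T$ to contain a full factor $[n_1]$ or $[n_2]$. Once that is in place, Step 2 is essentially an application of the (elementary) behaviour of $\mathcal{C}$ under disjoint unions, $\mathcal{C}(\bigsqcup_i H_i)=\bigcirc_i\mathcal{C}(H_i)$, which in turn rests only on the fact that removing a vertex changes only the component it lies in.
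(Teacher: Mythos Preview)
Your proof is correct and follows essentially the same route as the paper's: first the dichotomy that a nonempty $T\in\mathcal{C}(G_1*G_2)$ must contain $[n_1]$ or $[n_2]$ (via connectivity of the join when both sides survive), then the reduction to the $\mathcal{C}(G_{2i})$'s (respectively $\mathcal{C}(G_{1i})$'s) using that the remaining graph is a disjoint union. Your write-up is in fact a bit more careful than the paper's in handling the degenerate cases (excluding $T=[n_1]\cup[n_2]$, and checking $[n_{2i}]\setminus T_{2i}\neq\emptyset$ so that $s\geq 2$ really forces the cut point property for the $[n_1]$-vertices).
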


\begin{proof}
Let $G:=G_1*G_2$ and $T\in (\bigcirc_{i=1}^{r}\mathcal{C}(G_{1i}))\circ \{[n_2]\}$. So, $T=[n_2]\cup (\bigcup_{i=1}^r T_{1i})$, where $T_{1i}\in \mathcal{C}(G_{1i})$, for $i=1,\ldots,r$. We show that $T$ has cut point property. Let $j\in T$. If $j\in T_{1i}$, for some $i=1,\ldots,r$, then $G_{([n]\setminus T)\cup \{j\}}={G_{1i}}_{([n_{1i}]\setminus T_{1i})\cup \{j\}}\sqcup (\bigsqcup_{l=1,l\neq i}^r {G_{1l}}_{([n_{1l}]\setminus T_{1l})})$. In this case, $j$ is a cut point of ${G_{1i}}_{([n_{1i}]\setminus T_{1i})\cup \{j\}}$, since $T_{1i}\in \mathcal{C}(G_{1i})$. So that $j$ is also a cut point of $G_{([n]\setminus T)\cup \{j\}}$. If $j\in [n_2]$, then $G_{([n]\setminus T)\cup \{j\}}=j*\bigsqcup_{i=1}^r {G_{1i}}_{([n_{1i}]\setminus T_{1i})}$. So, $j$ is a cut point of $G_{([n]\setminus T)\cup \{j\}}$, since $G_{([n]\setminus T)}$ is disconnected. Thus, in both cases, $T$ has cut point property. If $T\in (\bigcirc_{i=1}^{s}\mathcal{C}(G_{2i}))\circ \{[n_1]\}$, then similarly, we have $T\in \mathcal{C}(G)$.
For the other inclusion, let $\emptyset \neq T\in \mathcal{C}(G)$. If $T$ does not contain $[n_1]$ and $[n_2]$, then $G_{[n]\setminus T}$ is connected, and hence no element $i$ of $T$ is a cut point of $G_{([n]\setminus T)\cup \{i\}}$. So, we have $[n_1]\subseteq T$ or $[n_2]\subseteq T$. Suppose that $[n_1]\subseteq T$. Then, $T=[n_1]\cup (\bigcup_{i=1}^s T_{2i})$, where $T_{2i}\subseteq [n_2]$, for $i=1,\ldots,s$. Let $1\leq i\leq s$. If $T_{2i}=\emptyset$, then, clearly, $T_{2i}\in \mathcal{C}(G_{2i})$. If $T_{2i}\neq \emptyset$, then
each $j\in T_{2i}$, is a cut point of $G_{([n]\setminus T)\cup \{j\}}$, since $T\in \mathcal{C}(G)$. So that $j$ is a cut point of ${G_{2i}}_{([n_{2i}]\setminus T_{2i})\cup \{j\}}$, because $j\in T_{2i}$ and $G_{2i}$'s are on disjoint sets of vertices. Thus, $T_{2i}\in \mathcal{C}(G_{2i})$. Therefore, $T\in (\bigcirc_{i=1}^{s}\mathcal{C}(G_{2i}))\circ \{[n_1]\}$. If $[n_2]\subseteq T$, then similarly we get $T\in (\bigcirc_{i=1}^{r}\mathcal{C}(G_{1i}))\circ \{[n_2]\}$.
\end{proof}

Note that the join of two complete graphs is also obviously complete, so that its binomial edge ideal has a linear resolution, by \cite[Theorem~2.1]{SK}, and hence its regularity is equal to $2$. The following theorem is the main result of this section.

\begin{Theorem}\label{reg-join}
Let $G_1$ and $G_2$ be graphs on $[n_1]$ and $[n_2]$, respectively, not both complete. Then
$$\mathrm{reg}(J_{G_1*G_2})=\mathrm{max}\{\mathrm{reg}(J_{G_1}),\mathrm{reg}(J_{G_2}),3\}.$$
\end{Theorem}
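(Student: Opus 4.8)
The plan is to prove the two inequalities $\reg(J_{G_1*G_2})\ge \max\{\reg(J_{G_1}),\reg(J_{G_2}),3\}$ and $\reg(J_{G_1*G_2})\le \max\{\reg(J_{G_1}),\reg(J_{G_2}),3\}$ separately, and to treat the upper bound by induction on $n_1+n_2$ using the edge-removal exact sequence of \ref{reg-chordal}.

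For the lower bound, first note that since $G_1$ and $G_2$ are not both complete, say $G_1$ is not complete, then $G_1*G_2$ contains an induced path of length $2$ (two nonadjacent vertices of $G_1$ together with any vertex of $G_2$), so $\reg(J_{G_1*G_2})\ge 3$ by \cite[Corollary~2.3]{MM}. To get $\reg(J_{G_1*G_2})\ge\reg(J_{G_i})$, I would compare the minimal primes: every $T\in\mathcal C(G_1)$, after adding $[n_2]$, gives an element of $\mathcal C(G_1*G_2)$ (this is the $r=s=1$ analogue of \ref{both disconnected 1}, but it holds without the disconnectedness hypothesis because any proper subset of the vertices leaves $G_1*G_2$ connected unless it swallows one side entirely), and the corresponding prime $P_{T\cup[n_2]}(G_1*G_2)$ has the same residue ring structure relevant for regularity as $P_T(G_1)$ tensored with a polynomial ring; more robustly, one can use that $G_i$ is an induced subgraph of $G_1*G_2$ and invoke the known fact that binomial edge ideal regularity does not decrease under passing to induced subgraphs (this is essentially \cite[Corollary~2.3]{MM} territory, or follows from the surjection argument in the literature). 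Either route gives $\reg(J_{G_1*G_2})\ge\max\{\reg(J_{G_1}),\reg(J_{G_2})\}$.

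For the upper bound, set $G=G_1*G_2$ with $N:=n_1+n_2$ vertices and induct on $N$. Pick an edge $e$ of $G$; the natural choice is an edge $e=\{v,w\}$ with $v\in[n_1]$, $w\in[n_2]$, i.e. one of the "join" edges. Apply \ref{reg-chordal}(a): $\reg(J_G)\le\max\{\reg(J_{G\setminus e}),\reg(J_{G\setminus e}:f_e)+1\}$. The key point is that $G\setminus e$ is again a join-like graph (it is $G$ minus one cross edge, so $v$ and $w$ are still adjacent to everything on the other side except each other), and one should analyze $(G\setminus e)_e$ via \ref{M-colon2}: since $v,w$ are joined through a length-two path via any vertex $u$ on the opposite side, the ideal $I_{(G\setminus e)\cup e}=I_G$ contains $x_u,y_u$ for many $u$, which collapses $J_{G\setminus e}:f_e$ substantially — in fact one expects $(G\setminus e)_e$ to become complete or nearly complete, forcing $\reg(J_{G\setminus e}:f_e)\le 2$. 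Then iterating removal of all cross edges reduces $G$ to the disjoint union $G_1\sqcup G_2$ (on the same vertex set), whose regularity is $\max\{\reg(J_{G_1}),\reg(J_{G_2})\}$ since the binomial edge ideal of a disjoint union is the sum in disjoint variables. Combining the $\le 2$ colon bounds (giving a contribution of $3$) with the disjoint-union value yields $\reg(J_G)\le\max\{\reg(J_{G_1}),\reg(J_{G_2}),3\}$. One should be careful that when removing cross edges the intermediate graphs stay in a class where the colon ideal stays controlled; here \ref{both disconnected 1} and its method are the right tool, since after deleting enough cross edges both pieces become disconnected in the relevant way.

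The main obstacle I expect is the bookkeeping in the colon-ideal step: showing precisely that $\reg(J_{G\setminus e}:f_e)\le 2$ — equivalently that $J_{(G\setminus e)_e}+I_G$ has regularity $2$ — requires identifying $(G\setminus e)_e$ explicitly and checking it is complete (or a complete graph on the non-eliminated vertices after the monomials $x_u,y_u$ are factored out), which depends on the neighborhoods $N_{G\setminus e}(v)$ and $N_{G\setminus e}(w)$ filling in all missing edges. When one of $G_1,G_2$ is complete this needs slightly separate handling. The inductive hypothesis must be applied to $G_1'*G_2$ for a proper subgraph $G_1'$ of $G_1$ (or to the disjoint union directly), so one also has to make sure "not both complete" is preserved or that the base case (both complete, regularity $2\le 3$) absorbs the exception.
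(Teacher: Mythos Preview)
Your lower-bound argument is fine and matches the paper's: $G_1,G_2$ are induced subgraphs of $G_1*G_2$, and non-completeness forces $\reg\geq 3$.

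The upper-bound strategy, however, does not work. The decisive obstruction is not the colon ideal bookkeeping you flag, but the term $\reg(J_{G\setminus e})$ in \ref{reg-chordal}(a): removing a cross edge can \emph{increase} the regularity above the target. Take $G_1$ and $G_2$ each consisting of two isolated vertices, so $G_1*G_2=K_{2,2}$ on four vertices and the desired bound is $3$. Deleting any cross edge $e$ leaves $G\setminus e\cong P_4$, whose binomial edge ideal has regularity $4$. Thus \ref{reg-chordal}(a) gives only
\[
\reg(J_{K_{2,2}})\le \max\{\reg(J_{P_4}),\,\reg(J_{G\setminus e}:f_e)+1\}=\max\{4,\,3\}=4,
\]
which is useless. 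Chaining further edge removals cannot recover from this, since the chain of inequalities records the worst intermediate value. More generally, the graphs interpolating between $G_1*G_2$ and $G_1\sqcup G_2$ are neither joins nor disjoint unions, and there is no reason their regularities stay below $\max\{\reg(J_{G_1}),\reg(J_{G_2}),3\}$; paths appearing as intermediate graphs are the extreme case. Your anticipated difficulty (``$\reg(J_{G\setminus e}:f_e)\le 2$'') also fails at later steps in this same example, but it is the $\reg(J_{G\setminus e})$ term that already kills the argument at step one.

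The paper's proof avoids edge removal entirely. It first treats the case where both $G_1$ and $G_2$ are disconnected: then \ref{both disconnected 1} gives an explicit description of the minimal primes of $J_G$, which allows one to write $J_G=Q\cap Q'$ with $Q=(x_i,y_i:i\in[n_1])+J_{G_2}$ and $Q'=J_{K_n}\cap\big((x_i,y_i:i\in[n_2])+J_{G_1}\big)$, and $Q+Q'=(x_i,y_i:i\in[n_1])+J_{K_{n_2}}$. A single Mayer--Vietoris sequence $0\to J_G\to Q\oplus Q'\to Q+Q'\to 0$ then yields the bound directly, since each piece has regularity controlled by $\reg(J_{G_1})$, $\reg(J_{G_2})$, or by complete graphs (regularity $2$). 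The general case is reduced to the disconnected case by adjoining an isolated vertex to each $G_i$ and using that $G_1*G_2$ is an induced subgraph of the enlarged join. The key point is that this decomposition respects the join structure globally, rather than degrading it edge by edge.
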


\begin{proof} Let $G:=G_1*G_2$. Note that since $G$ is not a complete graph, $J_G$ does not have a linear resolution, by \cite[Theorem~2.1]{SK}. So that $\mathrm{reg}(J_G)\geq 3$. On the other hand, by \cite[Proposition~8]{SK1}, $\mathrm{reg}(J_G)\geq\mathrm{reg}(J_{G_1})$ and $\mathrm{reg}(J_G)\geq\mathrm{reg}(J_{G_2})$, because $G_1$ and $G_2$ are induced subgraphs of $G$. So, $\mathrm{reg}(J_G)\geq \mathrm{max}\{\mathrm{reg}(J_{G_1}),\mathrm{reg}(J_{G_2}),3\}$. For the other inequality, first, suppose that $G_1$ and $G_2$ are both disconnected graphs. Let $G_1=\bigsqcup_{i=1}^r G_{1i}$ and $G_2=\bigsqcup_{i=1}^s G_{2i}$ be two graphs on disjoint sets of vertices
$[n_1]=\bigcup_{i=1}^r [n_{1i}]$ and $[n_2]=\bigcup_{i=1}^s [n_{2i}]$,
respectively, where $r,s\geq 2$. By \ref{both disconnected 1}, $\mathcal{C}(G)=\{\emptyset\}\cup \big{(}(\bigcirc_{i=1}^{r}\mathcal{C}(G_{1i}))\circ \{[n_2]\} \big{)}\cup \big{(}(\bigcirc_{i=1}^{s}\mathcal{C}(G_{2i}))\circ \{[n_1]\} \big{)}$. So, $J_{G}=Q\cap Q'$, where
\begin{equation}
Q=\bigcap_{\substack{
T\in \mathcal{C}(G) \\
[n_1]\subseteq T
}}
P_T(G)~~,~~Q'=\bigcap_{\substack{
T\in \mathcal{C}(G) \\
[n_1]\nsubseteq T
}}
P_T(G).
\nonumber
\end{equation}
Thus, we have
\begin{equation}
Q=(x_i,y_i:i\in [n_1])+\bigcap_{\substack{
T\in \mathcal{C}(G) \\
[n_1]\subseteq T
}}
P_{T\setminus [n_1]}(G_2)
\nonumber
\end{equation}
and
\begin{equation}
Q'=P_{\emptyset}(G)\cap\big{(}\bigcap_{\substack{
\emptyset\neq T\in \mathcal{C}(G) \\
[n_1]\nsubseteq T
}}
P_T(G)\big{)}
=P_{\emptyset}(G)\cap \big{(}(x_i,y_i:i\in [n_2])+\bigcap_{\substack{
T\in \mathcal{C}(G) \\
[n_2]\subseteq T
}}
P_{T\setminus [n_2]}(G_1)\big{)}.
\nonumber
\end{equation}
So, one can see that $Q=(x_i,y_i:i\in [n_1])+J_{G_2}$, $Q'=J_{K_n}\cap \big{(}(x_i,y_i:i\in [n_2])+J_{G_1}\big{)}$ and $Q+Q'=(x_i,y_i:i\in [n_1])+J_{K_{n_2}}$.
Now, consider the short exact sequence $$0\rightarrow J_G\rightarrow Q\oplus Q'\rightarrow Q+Q'\rightarrow 0.$$
By \cite[Corollary~18.7]{P}, we have $\mathrm{reg}(J_G)\leq \mathrm{max}\{\mathrm{reg}(Q),\mathrm{reg}(Q'),\mathrm{reg}(Q+Q')+1\}$. On the other hand, we have $\mathrm{reg}(Q)=\mathrm{reg}(J_{G_2})$, $\mathrm{reg}(Q')\leq \mathrm{max}\{\mathrm{reg}(J_{G_1}),\mathrm{reg}(K_{n_1})+1=3\}$ (by using a suitable short exact sequence as above), and $\mathrm{reg}(Q+Q')=\mathrm{reg}(K_{n_2})+1=3$. Hence, $\mathrm{reg}(J_G)\leq \mathrm{max}\{\mathrm{reg}(J_{G_2}),\mathrm{reg}(J_{G_1}),3\}$. Now, suppose that $G_1$ or $G_2$ is connected. We add an isolated vertex $v$ to $G_1$ and an isolated vertex $w$ to $G_2$. Thus, we obtain two disconnected graphs $G_1'$ and $G_2'$. So, by the above discussion, we have $\mathrm{reg}(J_{G_1'*G_2'})\leq \mathrm{max}\{\mathrm{reg}(J_{G_1'}),\mathrm{reg}(J_{G_2'}),3\}$. But, clearly, we have $\mathrm{reg}(J_{G_1'})=\mathrm{reg}(J_{G_1})$ and $\mathrm{reg}(J_{G_2'})=\mathrm{reg}(J_{G_2})$, so that $\mathrm{reg}(J_{G_1'*G_2'})\leq \mathrm{max}\{\mathrm{reg}(J_{G_1}),\mathrm{reg}(J_{G_2}),3\}$. Thus, the result follows by \cite[Proposition~8]{SK1}, since $G_1*G_2$ is an induced subgraph of $G_1'*G_2'$.
\end{proof}

The following corollary generalizes the result of \cite{SZ} on the regularity of complete bipartite graphs.

\begin{Corollary}\label{reg-t-partite}
Let $G$ be a complete $t$-partite graph, where $t\geq 2$. If $G$ is not complete, then $\mathrm{reg}(J_{G})=3$.
\end{Corollary}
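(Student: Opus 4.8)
The plan is to derive Corollary~\ref{reg-t-partite} directly from Theorem~\ref{reg-join} by induction on the number of parts $t$. First I would record the base case $t=2$: a complete bipartite graph $K_{m_1,m_2}$ that is not complete must have $m_1,m_2\geq 2$, and it equals the join $\overline{K_{m_1}}*\overline{K_{m_2}}$ of two edgeless graphs on $\geq 2$ vertices each. Since the empty graph on $\geq 2$ vertices has binomial edge ideal $(0)$, we have $\mathrm{reg}(J_{\overline{K_{m_i}}})=\mathrm{reg}(0)=0$ (or, if one prefers to avoid the zero ideal, note $\mathrm{reg}(S)=0$), and neither factor is complete, so Theorem~\ref{reg-join} gives $\mathrm{reg}(J_{K_{m_1,m_2}})=\max\{0,0,3\}=3$.

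For the inductive step, write a complete $t$-partite graph $G$ on parts $V_1,\ldots,V_t$ of sizes $m_1,\ldots,m_t$. The key observation is that $G = G' * H$, where $G'$ is the complete $(t-1)$-partite graph on the parts $V_1,\ldots,V_{t-1}$ and $H$ is the edgeless graph on $V_t$ (every vertex of $V_t$ is joined to every vertex outside $V_t$ in $G$, and there are no edges inside $V_t$). I would then split into cases according to how $G$ fails to be complete. If some part has size $\geq 2$, then either that part is $V_t$ — so $H$ is not complete and Theorem~\ref{reg-join} applies with $\mathrm{reg}(J_H)=0$ — or it is one of $V_1,\ldots,V_{t-1}$, in which case $G'$ is not complete, and one may further ensure $|V_t|\geq 1$ (if $|V_t|=1$ then $G$ is actually $(t-1)$-partite-plus-a-dominating-vertex; but one can instead just apply the join decomposition with the roles arranged so that the non-complete factor is clearly non-complete). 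In every case at most one of $G'$, $H$ is complete, so Theorem~\ref{reg-join} yields $\mathrm{reg}(J_G)=\max\{\mathrm{reg}(J_{G'}),\mathrm{reg}(J_H),3\}$.

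It then remains to bound $\mathrm{reg}(J_{G'})$. If $G'$ is complete (which happens exactly when $m_1=\cdots=m_{t-1}=1$, i.e. $G'=K_{t-1}$), then $\mathrm{reg}(J_{G'})=2\leq 3$ by \cite[Theorem~2.1]{SK}; if $G'$ is not complete, then by the induction hypothesis $\mathrm{reg}(J_{G'})=3$. Either way $\mathrm{reg}(J_{G'})\leq 3$, and since $\mathrm{reg}(J_H)=0\leq 3$, Theorem~\ref{reg-join} gives $\mathrm{reg}(J_G)=\max\{\le 3,\,0,\,3\}=3$, completing the induction.

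The argument is essentially bookkeeping once Theorem~\ref{reg-join} is in hand; the only point requiring a little care is making sure that in the join decomposition $G=G'*H$ at least one factor is genuinely non-complete, so that Theorem~\ref{reg-join} (rather than the linear-resolution statement) applies — this is where the hypothesis that $G$ itself is not complete gets used, and it is the step I would write out most carefully. A clean way to handle it uniformly: since $G$ is not complete, some part $V_i$ has $m_i\geq 2$; relabel so that $V_t$ is a part of size $\geq 2$ when possible, or otherwise note $G = (\text{complete }(t-1)\text{-partite on the parts including }V_i)*\overline{K_{m_t}}$ with the first factor non-complete — in both framings one factor is non-complete, and the induction closes.
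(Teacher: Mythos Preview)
Your proposal is correct and follows essentially the same inductive argument as the paper: write the complete $t$-partite graph as the join of an edgeless part with a complete $(t-1)$-partite graph and apply \ref{reg-join}. One small slip in your base case: $K_{m_1,m_2}$ being non-complete only forces \emph{at least one} $m_i\geq 2$ (e.g.\ $K_{1,2}$ is not complete), not both --- but this is harmless, since \ref{reg-join} only requires that not both factors be complete, and the rest of your argument goes through unchanged.
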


\begin{proof}
We use induction on $t\geq 2$, the number of parts. If $t=2$, then $G$ is the join of two graphs each consisting of some isolated vertices. So, the regularity of the binomial edge ideal of each of them is $0$. Thus, by \ref{reg-join}, we have $\mathrm{reg}(J_{G})=3$. Now, suppose that $t>2$ and the result is true for every complete $(t-1)$-partite graph which is not complete. Let $V_1,\ldots,V_t$ be the partition of the vertices of $G$ to $t$ parts. Hence, we have
$G=G_{V_t}*G_{V\setminus V_t}$, and $G_{V\setminus V_t}$ is a complete $(t-1)$-partite graph. If $G_{V\setminus V_t}$ is a complete graph, then $|V_t|>1$, since, otherwise, $G$ is a complete graph, a contradiction. So, by \ref{reg-join}, $\mathrm{reg}(J_{G})=3$. If $G_{V\setminus V_t}$ is not complete, then by the induction hypothesis, we have $\mathrm{reg}(J_{G_{V\setminus V_t}})=3$. Thus, again by \ref{reg-join}, the result follows.
\end{proof}

\medskip
We end this section with some remarks regarding Conjecture~A. \\

Note that by \ref{reg-join}, we have if $G$ is a (multi)-fan graph (i.e. $K_1*\bigsqcup_{i=1}^t P_{n_i}$, for some $t\geq 1$, which might be a non-closed graph), then $\mathrm{reg}(J_G)=c(G)+1$. This implies that if Conjecture~A is true, then the given bound is sharp.

\begin{Corollary}\label{ConjB-join}
Let $G_1$ and $G_2$ be two graphs on $[n_1]$ and $[n_2]$, respectively. If Conjecture~A is true for $G_1$ and $G_2$, then it is also true for $G_1*G_2$.
\end{Corollary}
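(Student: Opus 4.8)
The plan is to combine Theorem~\ref{reg-join} with a straightforward count of the maximal cliques of a join product. Write $G := G_1 * G_2$ and let $c(\cdot)$ denote the number of maximal cliques. The first step is the purely combinatorial identity $c(G) = c(G_1)\,c(G_2)$. Indeed, since every vertex of $[n_1]$ is adjacent in $G$ to every vertex of $[n_2]$, a subset $C \subseteq [n_1]\cup[n_2]$ is a clique of $G$ if and only if $C\cap[n_1]$ is a clique of $G_1$ and $C\cap[n_2]$ is a clique of $G_2$; consequently $C = C_1\cup C_2$ is a maximal clique of $G$ exactly when $C_1$ is a maximal clique of $G_1$ and $C_2$ is a maximal clique of $G_2$. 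I will also use the elementary fact that a graph is complete if and only if it has a unique maximal clique, so that a graph which is \emph{not} complete has $c \geq 2$.

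Next I would split into two cases. If both $G_1$ and $G_2$ are complete, then $G$ is complete, $J_G$ has a linear resolution by \cite[Theorem~2.1]{SK}, and $\mathrm{reg}(J_G) = 2 = c(G)+1$; so Conjecture~A holds (with equality). If $G_1$ and $G_2$ are not both complete, then Theorem~\ref{reg-join} gives $\mathrm{reg}(J_G) = \max\{\mathrm{reg}(J_{G_1}),\mathrm{reg}(J_{G_2}),3\}$, and it suffices to bound each of the three terms by $c(G)+1 = c(G_1)c(G_2)+1$.

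For the first two terms, the hypothesis that Conjecture~A holds for $G_1$ and $G_2$ yields $\mathrm{reg}(J_{G_i}) \leq c(G_i)+1 \leq c(G_1)c(G_2)+1$, where the last inequality uses $c(G_{3-i}) \geq 1$. For the third term, since one of $G_1,G_2$ is not complete it has at least two maximal cliques, whence $c(G_1)c(G_2) \geq 2$ and $3 \leq c(G_1)c(G_2)+1$. Combining, $\mathrm{reg}(J_G) \leq c(G_1)c(G_2)+1 = c(G)+1$, as desired. I do not anticipate a genuine obstacle here; the only point deserving a line of care is the observation that a non-complete graph has at least two maximal cliques, which is exactly what lets the constant $3$ in Theorem~\ref{reg-join} be absorbed.
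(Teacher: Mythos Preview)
Your proof is correct and follows exactly the same approach as the paper: use the identity $c(G_1*G_2)=c(G_1)\,c(G_2)$, dispose of the case where both factors are complete directly, and otherwise invoke \ref{reg-join}. The paper's proof is terser, leaving the routine inequalities you spell out (in particular the absorption of the constant $3$) to the reader.
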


\begin{proof}
By \ref{reg-join}, it is enough to note that $c(G_1*G_2)=c(G_1)c(G_2)$, and that if $G_1$ and $G_2$ are complete graphs, then $G_1*G_2$ is also complete and Conjecture~A is true for it.
\end{proof}

\textbf{Acknowledgments:} The authors would like to thank Professor J\"{u}rgen Herzog for his useful and valuable comments.


\begin{thebibliography}{}


\bibitem{D} A. Dokuyucu, {\em Extremal Betti numbers of some classes of binomial edge ideals}, (2013), arXiv:1310.2903.


\bibitem{EHH} V. Ene, J. Herzog and T. Hibi, {\em Cohen-Macaulay binomial edge ideals}, Nagoya Math. J. 204 (2011), 57-68.


\bibitem{EZ} V. Ene and A. Zarojanu, {\em On the regularity of binomial edge ideals}, Math. Nachr. 288,  No. 1 (2015), 19-24. 


\bibitem{HHHKR} J. Herzog, T. Hibi, F. Hreinsdotir, T. Kahle and J. Rauh, {\em Binomial edge ideals and conditional independence statements},
Adv. Appl. Math. 45 (2010), 317-333.


\bibitem{KS} D. Kiani and S. Saeedi Madani, {\em Binomial edge ideals with pure resolutions}, Collect. Math. 65 (2014), 331-340.


\bibitem{KS1} D. Kiani and S. Saeedi Madani, {\em Some Cohen-Macaulay and unmixed binomial edge ideals}, to appear in Comm. Algebra.


\bibitem{MM} K. Matsuda and S. Murai, {\em Regularity bounds for binomial edge ideals}, Journal of Commutative Algebra. 5(1) (2013), 141-149.


\bibitem{MSh} F. Mohammadi and L. Sharifan, {\em Hilbert function of binomial edge ideals}, Comm. Algebra 42 (2014), 688-703.


\bibitem{O} M. Ohtani, {\em Graphs and ideals generated by some 2-minors}, Comm. Algebra. 39 (2011), 905-917.


\bibitem{P} I. Peeva, {\em Graded syzygies}, Springer, (2010).


\bibitem{SK} S. Saeedi Madani and D. Kiani, {\em Binomial edge ideals of graphs}, The Electronic Journal of Combinatorics. 19(2) (2012), $\sharp$ P44.


\bibitem{SK1} S. Saeedi Madani and D. Kiani, {\em On the binomial edge ideal of a pair of graphs}, The Electronic Journal of Combinatorics. 20(1) (2013), $\sharp$ P48.

\bibitem{SZ} P. Schenzel and S. Zafar, {\em Algebraic properties of the binomial edge ideal of a complete bipartite graph}, to appear in An. St. Univ. Ovidius Constanta, Ser. Mat.


\bibitem{ZZ} Z. Zahid and S. Zafar, {\em On the Betti numbers of some classes of binomial edge ideals}, The Electronic Journal of Combinatorics. 20(4) (2013), $\sharp$ P37.

\end{thebibliography}
\end{document}